\documentclass{amsart}
\usepackage{graphicx}
\usepackage{amssymb, amsmath}
\usepackage{tikz}
\usepackage{cleveref}
\usepackage{bm}
\usepackage{xfrac}

\newtheorem{theorem}{Theorem}[section]
\newtheorem{corollary}[theorem]{Corollary}
\newtheorem{lemma}[theorem]{Lemma}
\newtheorem{proposition}[theorem]{Proposition}

\theoremstyle{definition}
\newtheorem{definition}[theorem]{Definition}

\newtheorem{example}[theorem]{Example}

\theoremstyle{remark}
\newtheorem{remark}[theorem]{Remark}

\numberwithin{equation}{section}

\newcommand{\aplim}{\operatorname*{ap-lim}}
\newcommand{\bound}{N}
\newcommand{\clos}{\operatorname{clos}}
\newcommand{\distanceM}{\operatorname{\mathcal{D}}}
\newcommand{\Haus}{\mathrm{Haus}}
\newcommand{\GH}{\mathrm{GH}}

\newcommand{\Hom}{\operatorname{Hom}}
\renewcommand{\Im}{\operatorname{Im}}
\newcommand{\Ker}{\operatorname{Ker}}
\newcommand{\Mag}{\mathrm{Mag}}

\newcommand{\FMetmag}{\operatorname{FMet}^\circ}
\newcommand{\sumofentry}{\operatorname{\mathbf{sum}}}
\newcommand{\adj}{\operatorname{adj}}

\newcommand{\diag}{\operatorname{diag}}
\newcommand{\dis}{\operatorname{dis}}
\newcommand{\rank}{\operatorname{rank}}

\newcommand{\Sym}{\operatorname{Sym}}
\newcommand{\trans}{\mathsf{T}}

\newcommand{\vol}{\operatorname{vol}}
\newcommand{\zetaM}{\operatorname{\mathcal{Z}}}

\newcommand{\Z}{\mathbb{Z}}

\newcommand{\R}{\mathbb{R}}

\title[Approximate continuity of magnitude and weighting]{Approximate Gromov--Hausdorff continuity of magnitude and weighting}

\author{Masahiko Yoshinaga}
\address{Department of Mathematics, The University of Osaka, Japan}
\email{yoshinaga@math.sci.osaka-u.ac.jp}

\begin{document}

\begin{abstract}
Magnitude gives an effective size of a finite metric space and is now used in several data-analysis settings. For such applications, it is natural to ask how magnitude behaves under Gromov-Hausdorff perturbations, including collisions of points. 
However, magnitude is nowhere continuous
on finite metric spaces with the Gromov--Hausdorff topology. We show that this
failure is nongeneric in a precise measure-theoretic sense. 
After fixing the number of points in each collapsing cluster, magnitude is approximately continuous. 

More strongly, when clusters of points collapse to the points of a
limit space, the total weighting of each cluster approximately converges to
the weighting of the corresponding limit point. The main tool is a general
result on approximate limits of inverses near singular matrices.
\end{abstract}

\maketitle

\tableofcontents


\section{Introduction}

\subsection{Effective number of points and magnitude}
\label{subsec:eff}
The invariant now called the magnitude of a finite metric space goes back to
Solow and Polasky's work on biological diversity \cite{sp-bio}. Their aim was
to measure an \emph{effective number of species} from pairwise dissimilarities:
species that are very similar should not count as fully distinct. The same
matrix formula appears in Leinster's definition of magnitude, which arose from
the Euler characteristic of enriched categories \cite{lei-mag}. Magnitude has since found connections with geometry and analysis \cite{lm-mag,mec-pos}, and magnitude homology was later introduced as its categorification \cite{hw-cat,ls-mh}. 

Let $X=\{x_1,\ldots,x_n\}$ be a finite metric space and put
$d_{ij}=d(x_i,x_j)$. Given a decreasing function
$f:[0,\infty)\to[0,1]$ with $f(0)=1$ and $f(a)\to0$ as $a\to\infty$, one can
form the \emph{similarity matrix} 
\[
 Z_X^{(f)}=(f(d_{ij}))_{i,j=1}^n.
\]
When this matrix is invertible, following 
the viewpoint of Solow and Polasky, we regard 
the sum of the entries of 
$(Z_X^{(f)})^{-1}$ as an \emph{effective number of points (``species'' in \cite{sp-bio})}. 
Write $\sumofentry(A)$ for the sum of all entries of a matrix $A$. Two basic
requirements are
\begin{align*}
\text{(a)}\quad & d_{ij}\gg0\quad(i\ne j)
&&\Longrightarrow& \sumofentry((Z_X^{(f)})^{-1})&\approx n,\\
\text{(b)}\quad & d_{ij}\approx0\quad\text{for all }i,j
&&\Longrightarrow& \sumofentry((Z_X^{(f)})^{-1})&\approx1.
\end{align*}

Condition (a) says that well-separated points should count as distinct, whereas condition (b) concerns what happens when several points collapse to a single point.

A related principle appears in the diversity theory of Leinster--Cobbold \cite{lc-meas}. Their diversity measures depend on both a similarity matrix and a prescribed abundance distribution. 
They are unchanged when identical species are merged, with their abundances added. Moreover, for a fixed number of species, their diversity measures depend continuously on the similarity data, so merging nearly identical species changes the diversity only slightly.

For magnitude, the behavior under such collisions is more subtle. If the number of points is fixed and the zeta matrix remains nonsingular, magnitude varies continuously with the distances. At a collision, however, several points may merge into one, and the corresponding limiting zeta matrix becomes singular. Thus the usual continuity argument for a fixed number of points does not apply. This leads naturally to the question of continuity with respect to the Gromov--Hausdorff topology, which allows such collisions.

From now on we use the standard choice $f(a)=e^{-a}$ and write
\[
 Z_X=(e^{-d_{ij}})_{i,j=1}^n
\]
for the \emph{zeta matrix}. If $Z_X$ is invertible, the \emph{weighting} and
\emph{magnitude} of $X$ are
\[
 \bm w_X=Z_X^{-1}J_{n,1},\qquad
 \Mag(X)=J_{1,n}Z_X^{-1}J_{n,1}=\sum_{i=1}^n w_X(x_i),
\]
where $J_{m,n}$ denotes the $m\times n$ all-ones matrix. For $t>0$, let
$tX=(X,td)$.

\subsection{Gromov--Hausdorff continuity and applications}

This question also fits into the broader study of quantitative relations between the Gromov-Hausdorff distance and metric invariants; see, for example, M\'emoli \cite{mem-gh}, motivated in part by problems in shape matching. 
It is also relevant to recent applications of magnitude in data analysis. Magnitude weightings have been used for image analysis and edge detection \cite{adamer-image}, while magnitude has been used to study neural network representations \cite{andreeva-nn} and the diversity of latent representations \cite{limbeck-div}. In the latter work, stability under data perturbations is itself one of the desired properties. The Gromov--Hausdorff metric is particularly natural in this context because it compares finite metric spaces without fixing labels or cardinality.

The simplest example of such a collision is obtained by scaling a fixed finite metric space to a point.
\begin{definition}
A finite metric space $X$ has the \emph{one-point property} if
\[
 \lim_{t\to0}\Mag(tX)=1.
\]
\end{definition}
This property can fail. Willerton's six-point example has small-scale limit
$6/5$ \cite[Example~2.2.8]{lei-mag}. It was proved that the
one-point property holds on a dense open set of finite metric spaces, while
exceptional small-scale limits can realize every prescribed value at least
$1$ \cite{ry-one}; examples with limits below $1$ are also known \cite{kam}. More general
paths of finite metric spaces collapsing to a point can behave even worse,
even for five-point spaces \cite[Example 2.6]{kry-con}. 
Several recent works have further studied continuity and monotonicity properties of magnitude 
\cite{gt-mon, hiy, kl-tra, kl-con, rw-mic}. 

Let
\[
 \FMetmag=\{X: X\text{ is a finite metric space and }\det Z_X\ne0\}.
\]
For a fixed cardinality, magnitude is continuous in the entries of the
distance matrix. The problem begins when Gromov--Hausdorff convergence allows
points to collide and the cardinality to drop. In fact, magnitude is nowhere
continuous on $\FMetmag$ in the Gromov--Hausdorff topology
\cite[Theorem~2.5]{kry-con}. 
On the other hand, magnitude does converge along generic linear paths approaching a fixed finite metric space \cite[Theorem 4.1]{kry-con}. 
This suggests that the discontinuity may be small in a measure-theoretic
sense, even though ordinary continuity fails everywhere.

We make this idea precise using approximate continuity. There is no single
Lebesgue measure on all finite metric spaces, since the space of ordered
$N$-point distance matrices has dimension $N(N-1)/2$. We therefore work
locally with fixed cardinality and fixed collision multiplicities. Each such
stratum is finite-dimensional and carries the usual Lebesgue density. Our
main theorem says that magnitude is approximately continuous on every such
stratum. We prove more: if clusters of points collapse to the points of a
limit space, then the total weighting of each cluster approximately converges
to the weighting of the corresponding limit point
(Theorem~\ref{thm:MMsplim}). 
Thus the exceptional approaches responsible for the failure of ordinary Gromov-Hausdorff continuity have density zero 
in every fixed local stratum.

The proof comes from linear algebra. When an invertible matrix $G$ approaches
a singular matrix $F$, the entries of $G^{-1}$ usually diverge. Nevertheless,
some linear combinations of these entries have canonical approximate limits.
Theorem~\ref{thm:linalg} gives a general criterion. The singular matrices
created by collisions of points satisfy this criterion, and this is why sums
of weights over collapsing clusters are more stable than the individual
weights.

\subsection{A motivating matrix calculation}

Condition~(b) suggests the following matrix statement. If an invertible
matrix $A$ is close to the all-ones matrix $J_{n,n}$, then one expects
\[
 \sumofentry(A^{-1})\approx1.
\]
The ordinary limit is false, even for $n=2$. For
\[
A_t=
\begin{pmatrix}
1+t+kt^2&1+2t\\
1+2t&1+3t
\end{pmatrix},
\]
one has $A_t\to J_{2,2}$ and
\[
 \sumofentry(A_t^{-1})\longrightarrow\frac{k}{k-1}
\]
as $t\to0$, whenever $k\ne1$. Thus different exceptional paths can give
different limits. The correct statement is obtained by ignoring a set of
approaches of density zero. In Corollary~\ref{cor:1pt} we prove
\[
 \aplim_{A\to J_{n,n}}\sumofentry(A^{-1})=1.
\]
Theorem~\ref{thm:linalg} proves a more general statement for inverses near
singular matrices.

For metric spaces, Gromov--Hausdorff convergence with bounded cardinality can
be described by distance matrices near a fixed limit space
(Lemma~\ref{lem:distmat}). Applying the matrix result to zeta matrices gives
approximate continuity of magnitude. The stronger weighting theorem says
that if a cluster collapses to a point $x_i$, then the sum of the weights in
that cluster approximately converges to $w_X(x_i)$. This is the multi-cluster
form of condition~(b).

The next example shows this behavior in a family where the limits can be
computed directly.

\begin{example}
\label{ex:limit}
Let $m, n\in\Z_{>0}$ and 
$\ell, a, b\in\R_{>0}$ such that $2\ell\geq\max\{a, b\}$. 
Define a metric on 
$Y_{a, b, \ell}=\{x_1, \dots, x_m, y_1, \dots, y_n\}$ 
by 
\begin{equation}
\begin{split}
d(x_i, x_j)&=a, (1\leq i, j\leq m, i\neq j)\\
d(y_i, y_j)&=b, (1\leq i, j\leq n, i\neq j)\\
d(x_i, y_j)&=\ell, (1\leq i\leq m, 1\leq j\leq n). 
\end{split}
\end{equation}
(See Figure \ref{fig:YtoX}.) 
\begin{figure}[htbp]
\centering
\begin{tikzpicture}

\filldraw[draw=black, fill=black] (0,0) circle [radius=0.06]; 
\filldraw[draw=black, fill=black] (0,0.4) node[above] {$x_j$} circle [radius=0.06]; 
\filldraw[draw=black, fill=black] (-0.35,0.2) node[left] {$x_i$} circle [radius=0.06]; 

\draw[thick, <->] (-0.4, -0.4) -- node[below] {$a$} ++(0.4, 0);

\filldraw[draw=black, fill=black] (8,0) circle [radius=0.06]; 
\filldraw[draw=black, fill=black] (8.6,0) circle [radius=0.06]; 
\filldraw[draw=black, fill=black] (8.6,0.6) node[above] {$y_j$} circle [radius=0.06]; 
\filldraw[draw=black, fill=black] (8,0.6) node[above] {$y_i$} circle [radius=0.06]; 

\draw[thick, <->] (8, -0.4) -- node[below] {$b$} ++(0.6, 0);

\draw[thick, <->] (0.4, -0.4) -- node[below] {$\ell$} ++(7.2, 0);

\end{tikzpicture}
\caption{$Y_{a, b, \ell}$}
\label{fig:YtoX}
\end{figure}
Then weightings are 
\[
\begin{split}
w(x_i)&=\frac{1+(n-1)e^{-b}-ne^{-\ell}}{(1+(m-1)e^{-a})(1+(n-1)e^{-b})-mne^{-2\ell}},\\
w(y_i)&=\frac{1+(m-1)e^{-a}-me^{-\ell}}{(1+(m-1)e^{-a})(1+(n-1)e^{-b})-mne^{-2\ell}}.
\end{split}
\]
Then we have the following limit as $a, b\to 0$
\[
\begin{split}
\lim_{a, b\to 0}w(x_i)&=
\frac{1}{m(1+e^{-\ell})},\\
\lim_{a, b\to 0}w(y_j)&=
\frac{1}{n(1+e^{-\ell})}.
\end{split}
\]
Therefore the total weight of each collapsing cluster converges to 
\[
\lim_{a, b\to 0}m\cdot w(x_i)=
\lim_{a, b\to 0}n\cdot w(y_j)=
\frac{1}{1+e^{-\ell}}, 
\]
which is the magnitude weighting of the two point space with 
distance $\ell$. 
Thus, although the individual weights depend on the number of points in each cluster, their sums have exactly the expected limit.
\end{example}

\section{Density and approximate continuity}
\label{sec:aplim}

\subsection{Density}

Let $A\subset\R^n$ be a Lebesgue measurable set and $\bm{a}\in\clos(A)$. 
\begin{definition}
\label{def:density}
If the limit 
\begin{equation}
\label{eq:density}
D(A, \bm{a}):=
\lim_{r\to 0}
\frac{\vol (A\cap B_r(\bm{a}))}{\vol (B_r(\bm{a}))}
\end{equation}
exists, it is called the \emph{density} of $A$ at $\bm{a}$ 
(where $B_r(\bm{a})=\{\bm{x}\in\R^n : |\bm{x}-\bm{a}|<r\}$ and 
$\vol(-)$ is the Lebesgue measure). 
\end{definition}

\begin{example}
Let 
$A=\{y>x^2\}$, $B=\{0<y<x^2\}$ and $C^\pm=\{\pm y>0\}$ 
as in Figure \ref{fig:density}. 
$A\cup C_-$ has density $1$ at $O$ in $\R^2$. 
$B$ has density $0$ at $O$ in $\R^2$. 
$A$ has density $1/2$ at $O$ in $\R^2$, but has density $1$ at $O$ in $C^+$. 
\begin{figure}[htbp]
\centering
\begin{tikzpicture}

\draw[thick] (-2,0) -- (2,0);

\draw[thick, domain=-1.5:1.5, samples=100, smooth] 
    plot (\x, {\x*\x});

\draw (0.2,-0.3) node {$O$}; 

\draw (0,1.2) node {$A$}; 
\draw (1.5,0.8) node {$B$}; 
\draw (-1.5,0.8) node {$B$}; 
\draw (-0.8,-0.6) node {$C^-$}; 
\draw (0,2) node {$C^+$}; 
\draw (-2,2) node {$C^+$}; 
\draw (2.2,2) node {$C^+$}; 

\filldraw[draw=black, fill=black] (0,0) circle [radius=0.08]; 

\end{tikzpicture}
\caption{Subsets of $\R^2$ and densities.}
\label{fig:density}
\end{figure}
\end{example}

\subsection{Approximate limit}

Let $U\subset\R^n$ be an open subset. 
Let us recall the following weaker 
notion of continuity \cite{eg-meas}.

\begin{definition}[Approximate limit; cf. \cite{eg-meas}]
Let $f:U\to\R$ be measurable and let $\bm a\in U$.  We write
\[
 \aplim_{\bm x\to\bm a}f(\bm x)=b
\]
if, for every $\varepsilon>0$, the measurable set
\[
 \{\bm x\in U:|f(\bm x)-b|<\varepsilon\}
\]
has density $1$ at $\bm a$. The function $f$ is \emph{approximately
continuous} at $\bm a$ if
\[
 \aplim_{\bm x\to\bm a}f(\bm x)=f(\bm a).
\]
\end{definition}

\begin{example}
We have 
\[
\aplim_{(x, y)\to(0,0)}\frac{x^2}{y}=0.
\]
It is a special case of Proposition \ref{prop:ratio} (1). 
\end{example}

\subsection{Two elementary invariance properties}

We record two facts that will be used when passing to the cone of distance matrices and when changing from distance to zeta coordinates.

\begin{proposition}[Restriction to a positive-density subset]
\label{prop:restrict}
Let $U\subset\R^n$ be open, let $f$ be a measurable map from almost all points in $U$ to a metric space $M$, and suppose that $\aplim_{x\to a}f(x)=m$. If $A\subset U$ is measurable and $D(A,a)>0$, then the restriction of $f$ to $A$ has relative approximate limit $m$ at $a$.
\end{proposition}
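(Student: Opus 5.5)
We need the relative density version: for every $\varepsilon>0$, the set $E_\varepsilon=\{x\in A: d_M(f(x),m)<\varepsilon\}$ should have relative density $1$ at $a$ within $A$. That is,
\[
\lim_{r\to0}\frac{\vol(E_\varepsilon\cap B_r(a))}{\vol(A\cap B_r(a))}=1.
\]
The plan is to reduce this to the hypothesis by passing to the complement.

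First fix $\varepsilon>0$ and put $S_\varepsilon=\{x\in U: d_M(f(x),m)<\varepsilon\}$. This set is measurable, since $f$ is measurable and defined almost everywhere; the null set where $f$ is undefined does not affect any volume. By hypothesis $S_\varepsilon$ has density $1$ at $a$, so its complement $C_\varepsilon=U\setminus S_\varepsilon$ has density $0$ at $a$. Here we use that $a\in U$ with $U$ open, so $B_r(a)\subset U$ for small $r$.

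Next, $E_\varepsilon=A\cap S_\varepsilon$ up to a null set, so
\[
\vol(A\cap B_r)-\vol(E_\varepsilon\cap B_r)\le\vol(C_\varepsilon\cap B_r),
\]
with $B_r=B_r(a)$. Dividing by $\vol(A\cap B_r)$, which is positive for small $r$, gives
\[
1-\frac{\vol(E_\varepsilon\cap B_r)}{\vol(A\cap B_r)}\le\frac{\vol(C_\varepsilon\cap B_r)/\vol(B_r)}{\vol(A\cap B_r)/\vol(B_r)}.
\]
As $r\to0$ the numerator on the right tends to $0$ and the denominator tends to $D(A,a)>0$. So the right side tends to $0$, and the relative density of $E_\varepsilon$ in $A$ at $a$ is $1$. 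Since $\varepsilon$ was arbitrary, this is the claim.

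The only delicate point is the denominator. Positivity of $D(A,a)$ is exactly what keeps $\vol(A\cap B_r)/\vol(B_r)$ bounded away from $0$ for small $r$, so the ratio cannot blow up. If only $\liminf$ positivity were assumed the same argument would still work, but with $D(A,a)$ assumed to exist as a positive limit there is no further obstacle.
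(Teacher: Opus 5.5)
Your proposal is correct and is essentially the paper's proof: both bound $\vol((A\setminus S_\varepsilon)\cap B_r(a))/\vol(A\cap B_r(a))$ by the density ratio of the complement of $S_\varepsilon$ divided by the density ratio of $A$. The first ratio tends to $0$ and the second to $D(A,a)>0$. Your remark that $\liminf_{r\to0}\vol(A\cap B_r(a))/\vol(B_r(a))>0$ would suffice is also right.
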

\begin{proof}
For $\varepsilon>0$, put $U_\varepsilon=\{x:d_M(f(x),m)<\varepsilon\}$. Since $D(U_\varepsilon,a)=1$,
\[
 \frac{\vol((A\setminus U_\varepsilon)\cap B_r(a))}{\vol(A\cap B_r(a))}
 \le
 \frac{\vol(U_\varepsilon^c\cap B_r(a))/\vol(B_r(a))}
 {\vol(A\cap B_r(a))/\vol(B_r(a))}\longrightarrow0,
\]
because the denominator tends to $D(A,a)>0$ as $r\to 0$. Hence $U_\varepsilon\cap A$ has relative density one in $A$ at $a$.
\end{proof}

\begin{proposition}[Invariance under a $C^1$ change of coordinates]
\label{prop:diffeodensity}
Let $U,V\subset\R^n$ be open and let $\Phi:U\to V$ be a $C^1$-diffeomorphism. For every measurable $A\subset U$ and $a\in U$,
\[
 D(A,a)=0 \Longleftrightarrow D(\Phi(A),\Phi(a))=0,
 \qquad
 D(A,a)=1 \Longleftrightarrow D(\Phi(A),\Phi(a))=1.
\]
Consequently approximate limits are invariant under such a change of coordinates.
\end{proposition}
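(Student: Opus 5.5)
The plan is to compare $\Phi$ near $a$ with its linearization $L=D\Phi(a)$, which is an invertible linear map, and to use that a small ball is mapped by $\Phi$ into a region sandwiched between two ellipsoids that are comparable to balls. Since $\Phi$ is a $C^1$-diffeomorphism, $\Phi^{-1}$ is also $C^1$. Hence on a small closed ball $\overline{B_{r_0}(a)}\subset U$ the map $\Phi$ is bi-Lipschitz: there are constants $0<c\le C$ with
\[
c|x-y|\le|\Phi(x)-\Phi(y)|\le C|x-y|.
\]
The Jacobian determinant is also bounded, $0<\lambda\le|\det D\Phi|\le\Lambda$, on this ball. From the bi-Lipschitz bounds we get the inclusions
\[
B_{cr}(\Phi(a))\subset\Phi(B_r(a))\subset B_{Cr}(\Phi(a))
\]
for $r<r_0$. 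The first inclusion needs a small argument, for instance invariance of domain, or simply applying the Lipschitz bound of $\Phi^{-1}$ on a neighbourhood of $\Phi(a)$.

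For the statement about density $0$, apply the change of variables formula $\vol(\Phi(E))=\int_E|\det D\Phi|$. It gives
\[
\frac{\vol(\Phi(A)\cap B_{cr}(\Phi(a)))}{\vol(B_{cr}(\Phi(a)))}\le\frac{\vol(\Phi(A\cap B_r(a)))}{c^n\omega_n r^n}\le\frac{\Lambda}{c^n}\cdot\frac{\vol(A\cap B_r(a))}{\omega_n r^n}.
\]
The right-hand side tends to $0$ when $D(A,a)=0$. Replacing $r$ by $s/c$ shows that $D(\Phi(A),\Phi(a))=0$. The converse follows by applying the same argument to $\Phi^{-1}$.

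The statement about density $1$ reduces to density $0$ by passing to complements. We have $D(A,a)=1$ if and only if $D(U\setminus A,a)=0$, since $B_r(a)\subset U$ for small $r$. Moreover $\Phi(U\setminus A)=V\setminus\Phi(A)$ because $\Phi$ is a bijection $U\to V$. So the density-$0$ case applied to $U\setminus A$ gives the density-$1$ case. Note that only the constants $c,\Lambda$ enter, not the exact value of the density. This is why only the values $0$ and $1$ are preserved, and a general density need not be.

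For the consequence, suppose $\aplim_{x\to a}f(x)=b$. The sets $\{y\in V:|f\circ\Phi^{-1}(y)-b|<\varepsilon\}$ are exactly $\Phi(\{x:|f(x)-b|<\varepsilon\})$, so they have density $1$ at $\Phi(a)$. Measurability is preserved because $\Phi$ maps measurable sets to measurable sets, since $\Phi$ is locally Lipschitz. The main technical point is the lower inclusion $B_{cr}(\Phi(a))\subset\Phi(B_r(a))$, together with checking that the constants are uniform near $a$. Both follow from continuity of $D\Phi$ and $D\Phi^{-1}$ at $a$ and $\Phi(a)$, so I expect no real obstacle.
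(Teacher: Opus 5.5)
Your proposal is correct and follows essentially the same route as the paper's proof. Both use the ball inclusions $B_{cr}(\Phi(a))\subset\Phi(B_r(a))\subset B_{Cr}(\Phi(a))$ coming from the invertible differential, together with bounds on $|\det d\Phi|$ and the change-of-variables formula, to carry density-zero sets to density-zero sets. Both then get the converse from $\Phi^{-1}$, pass to complements for density one, and apply the result to the $\varepsilon$-level sets for approximate limits. Your write-up just makes the key inequality and the measurability point more explicit than the paper does.
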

\begin{proof}
Since $d\Phi(a)$ is invertible and $d\Phi$ is continuous, there are constants $0<c<C<\infty$ such that, for all sufficiently small $r$,
\[
 B_{cr}(\Phi(a))\subset \Phi(B_r(a))\subset B_{Cr}(\Phi(a)).
\]
After shrinking the neighborhood if necessary, there are also $0<m<M<\infty$ with
\[
 m\le |\det d\Phi(x)|\le M.
\]
The change-of-variables formula and the ball inclusions show that a density-zero set at $a$ is carried to a density-zero set at $\Phi(a)$. Applying the same argument to $\Phi^{-1}$ gives the converse. Applying the density-zero statement to complements gives the assertion for density one. The final assertion follows by applying this to the sets on which a function is $\varepsilon$-close to its proposed approximate limit.
\end{proof}

\subsection{Approximate continuity of rational functions}
Let $F,G\in\R[x_1,\ldots,x_n]$ be nonzero polynomials, and write
\[
 F=f_p+f_{p+1}+\cdots,\qquad G=g_q+g_{q+1}+\cdots,
\]
where $f_i$ and $g_j$ are homogeneous and $f_p,g_q$ are nonzero. We call
$f_p$ and $g_q$ the lowest-degree homogeneous parts. Even when the ordinary
limit of $F/G$ at the origin does not exist, its approximate limit is often
determined by these lowest-degree parts.
\begin{proposition}
\label{prop:ratio}
Let $F=f_p+f_{p+1}+\cdots$ and $G=g_q+g_{q+1}+\cdots$, where $f_p$ and $g_q$ are the nonzero lowest-degree homogeneous parts.
\begin{enumerate}
\item If $p>q\ge0$, then
\[
 \aplim_{\bm x\to0}\frac{F(\bm x)}{G(\bm x)}=0.
\]
\item If $p=q$ and $f_p=g_p\ne0$, then
\[
 \aplim_{\bm x\to0}\frac{F(\bm x)}{G(\bm x)}=1.
\]
\item Let $0\in L\subset\R^n$ be a linear subspace. The same conclusions hold for the restrictions to $L$, provided that the restriction to $L$ of the lowest-degree part of the denominator is not identically zero. 
\end{enumerate}
\end{proposition}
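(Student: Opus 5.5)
The approach is to use polar coordinates and the homogeneity of the lowest-degree parts. For $\varepsilon>0$ the goal is to show that the bad set $\{\bm x: |F/G-b|\ge\varepsilon\}$ has density $0$ at the origin (with $b=0$ in case (1) and $b=1$ in case (2)). Write $\bm x=r\bm\theta$ with $r>0$ and $\bm\theta\in S^{n-1}$. Then
\[
 F(r\bm\theta)=r^p\bigl(f_p(\bm\theta)+rR_F(r,\bm\theta)\bigr),\qquad G(r\bm\theta)=r^q\bigl(g_q(\bm\theta)+rR_G(r,\bm\theta)\bigr),
\]
where $R_F,R_G$ are polynomials in $r$ and $\bm\theta$, hence bounded by a constant $K$ on $[0,1]\times S^{n-1}$. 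Since $g_q\ne0$ is a nonzero polynomial, its zero set on the sphere has surface measure zero. So for each $\delta>0$ the set $S_\delta=\{\bm\theta\in S^{n-1}:|g_q(\bm\theta)|<\delta\}$ has surface measure $\sigma(S_\delta)\to0$ as $\delta\to0$, by continuity of measure from above.

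Fix $\delta$ and take $r<r_0(\delta,\varepsilon)$ small. For $\bm\theta\notin S_\delta$ we have $|g_q(\bm\theta)+rR_G|\ge\delta/2$. In case (1),
\[
 \Bigl|\frac{F}{G}\Bigr|\le r^{p-q}\,\frac{|f_p|+rK}{\delta/2}<\varepsilon .
\]
In case (2),
\[
 \Bigl|\frac{F}{G}-1\Bigr|=\frac{r|R_F-R_G|}{|g_p+rR_G|}\le\frac{2rK}{\delta/2}<\varepsilon .
\]
Hence inside $B_{r_0}(0)$ the bad set is contained in the cone $\{r\bm\theta:\bm\theta\in S_\delta\}$, together with the null set $\{G=0\}$. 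The relative volume of this cone in $B_r(0)$, for $r<r_0$, is $\sigma(S_\delta)/\sigma(S^{n-1})$. Therefore the upper density of the bad set is at most $\sigma(S_\delta)/\sigma(S^{n-1})$ for every $\delta$. Letting $\delta\to0$ shows the density is $0$, so the good set has density $1$.

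For (3), the same argument is carried out inside $L\cong\R^k$ with Lebesgue measure on $L$. Restricting to $L$, $F|_L$ and $G|_L$ are polynomials whose degree-$p$ and degree-$q$ homogeneous parts are $f_p|_L$ and $g_q|_L$. The hypothesis that $g_q|_L\not\equiv0$ is exactly what is needed for $S_\delta\cap L$ to have small measure on the unit sphere of $L$. If $f_p|_L$ vanishes, that is harmless: in case (1) the bound only improves, and in case (2) we have $f_p|_L=g_p|_L$ anyway, so the estimates go through verbatim.

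The main (mild) obstacle is the measure-theoretic step $\sigma(S_\delta)\to0$, i.e.\ that a nonzero homogeneous polynomial cannot vanish on a positive-measure subset of the sphere. I would justify this by noting that vanishing on a positive-measure set of directions would, by homogeneity, give vanishing on a positive-measure cone in $\R^n$. But a nonzero polynomial has a Lebesgue-null zero set, which follows by induction on $n$ using Fubini.
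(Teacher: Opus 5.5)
Your proof is correct and follows the same route as the paper. Both use polar coordinates and discard a thin cone of directions where $|g_q|$ is small; that cone has small relative measure because a nonzero homogeneous polynomial cannot vanish on a positive-measure subset of the sphere. On the remaining directions both bound $|F/G|$ by $Cr^{p-q}$, and $|F/G-1|=|F-G|/|G|$ by $Cr$. The only differences are that the paper obtains (2) by applying (1) to $(F-G)/G$ where you carry out that estimate directly, and that you supply the measure-zero and continuity-from-above details the paper leaves implicit.
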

\begin{proof}
We prove (1). Write $\bm x=r\theta$, with $r>0$ and $\theta\in S^{n-1}$. Since the zero set of the nonzero polynomial $g_q|_{S^{n-1}}$ has $(n-1)$-dimensional measure zero, for every $\delta>0$ there is $\eta>0$ such that $E=\{\theta\in S^{n-1} : |g_q(\theta)|\geq \eta\}$ satisfies 
\[
 \frac{\vol_{n-1}(E)}{\vol_{n-1}(S^{n-1})}>1-\delta. 
\]
For $\theta\in S^{n-1}$, we have 
\[
 F(r\theta)=r^p(f_p(\theta)+O(r)),\qquad
 G(r\theta)=r^q(g_q(\theta)+O(r)).
\]
Since $|g_q(\theta)|$ is bounded below by $\eta$ on $E$, 
for sufficiently small $r$, we have 
\[
 \left|\frac{F(r\theta)}{G(r\theta)}\right|\le C r^{p-q}
\]
for $\theta\in E$. 
Thus for every $\varepsilon>0$ the set on which $|F/G|<\varepsilon$ occupies at least a proportion $1-\delta$ of every sufficiently small ball about $0$. Since $\delta>0$ is arbitrary, the required density is one.

For (2), write
\[
 \frac{F}{G}-1=\frac{F-G}{G}.
\]
The lowest nonzero homogeneous part of $F-G$, if $F\ne G$, has degree strictly larger than $p=q$, so (1) applies. If $F=G$, the assertion is immediate. Part (3) is the same argument on the Euclidean space $L$; the hypothesis ensures that the restricted denominator has the asserted lowest degree.
\end{proof}

\section{Approximate limits of inverses near singular matrices}
\label{sec:linear}

We first isolate the linear-algebraic mechanism behind the results of this paper.
Let $V$ and $W$ be real vector spaces of the same finite dimension and let
$f\colon V\to W$ be linear.  We identify $\Hom(V,W)$ with a Euclidean space
after choosing bases; the notion of a set of density one, and hence approximate
limits, is independent of this choice.

\begin{theorem}\label{thm:linalg}
Let $f\colon V\to W$ be linear, let $\ell\in V^*$ satisfy
$\ell|_{\Ker f}=0$, and let $b\in\Im f$.  Choose $a\in V$ with $f(a)=b$.
Then, on the open set of invertible maps $g\colon V\to W$,
\[
  \aplim_{g\to f}\ell(g^{-1}b)=\ell(a).
\]
The right-hand side is independent of the choice of $a$.
\end{theorem}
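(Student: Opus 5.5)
We write $g=f+h$ with $h\in\Hom(V,W)$ small, and treat $\ell(g^{-1}b)-\ell(a)$ as a rational function of $h$. The aim is to apply Proposition~\ref{prop:ratio}(1) at $h=0$, in the coordinates on $\Hom(V,W)\cong\R^{n^2}$ where $n=\dim V$. Independence of the right-hand side from the choice of $a$ is immediate: two choices differ by an element of $\Ker f$, on which $\ell$ vanishes.

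\textbf{Step 1: reduce to a single term.} Since $b=f(a)=g(a)-h(a)$, we get $g^{-1}b=a-g^{-1}h a$. Hence
\[
 \ell(g^{-1}b)-\ell(a)=-\ell(g^{-1}ha)=-\frac{\ell\bigl(\adj(f+h)\,h\,a\bigr)}{\det(f+h)}.
\]
We fix bases so that $\adj$ makes sense. Both numerator and denominator are polynomials in the entries of $h$.

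\textbf{Step 2: degree of the denominator.} Let $k=\dim\Ker f$. If $k=0$, then $f$ is invertible and the claim is ordinary continuity, so assume $k\ge1$. Every $j\times j$ minor of $f+h$ vanishes to order at least $j-(n-k)$ at $h=0$, because $f$ has rank $n-k$. Hence $\det(f+h)$ has no terms of degree $<k$. Its degree-$k$ part is, up to a nonzero constant, $\det$ of the induced map $\bar h\colon \Ker f\to W/\Im f$ (expand in adapted bases). This is a nonzero homogeneous polynomial of degree $k$ in $h$. So the lowest-degree part of the denominator has degree exactly $q=k$.

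\textbf{Step 3: degree of the numerator (the main obstacle).} By the same minor count, each entry of $\adj(f+h)$ has order $\ge k-1$. Write $\adj(f+h)=A_{k-1}(h)+(\text{higher})$ with $A_{k-1}$ homogeneous of degree $k-1$. I will compare homogeneous parts in the two identities
\[
\adj(f+h)(f+h)=\det(f+h)\,I=(f+h)\adj(f+h).
\]
The right-hand side has order $\ge k$. In degree $k-1$ the left-hand sides contribute only $A_{k-1}f$ and $fA_{k-1}$, since there is no $A_{k-2}$. Therefore $A_{k-1}f=0$ and $fA_{k-1}=0$. The second equation says $A_{k-1}$ takes values in $\Ker f$, so $\ell\circ A_{k-1}=0$ by the hypothesis $\ell|_{\Ker f}=0$.

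Consequently $\ell\circ\adj(f+h)$ has order $\ge k$. Multiplying by $h a$, which is linear in $h$, shows the numerator $\ell(\adj(f+h)ha)$ has order $\ge k+1>k$. If the numerator is identically zero, the claim is trivial.

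\textbf{Step 4: conclude.} Proposition~\ref{prop:ratio}(1) applies with $p\ge k+1>q=k$. It gives $\aplim_{h\to0}\ell(g^{-1}b)-\ell(a)=0$ on the complement of $\{\det(f+h)=0\}$, which is a null set. Translating by $f$ and using the basis-independence noted before the theorem (or Proposition~\ref{prop:diffeodensity}) gives the statement.

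The delicate point is Step 3: the vanishing of the degree-$(k-1)$ part of $\ell\circ\adj$. Everything else is bookkeeping of orders of minors.
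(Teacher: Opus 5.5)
Your proof is correct, and for the key step it takes a different route from the paper's. The framework is the same: write the quantity as a ratio of polynomials in $h=g-f$, check that $\det(f+h)$ has lowest-degree part $\det(C)\det(U)$ of degree $q=k=\dim\Ker f$, and conclude with Proposition~\ref{prop:ratio}.

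The paper works in adapted bases $V=V'\oplus\Ker f$, $W=\Im f\oplus W'$. It reduces by linearity to $\ell=e_i^{\trans}$ and $b=Ce_j$ with $i,j\le p$. It then reads off that the degree-$q$ part of the numerator $e_i^{\trans}\adj(G)Ce_j$ is $\delta_{ij}\det(C)\det(U)$. Finally it uses part~(2) of Proposition~\ref{prop:ratio} when $i=j$ and part~(1) when $i\ne j$.

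You instead subtract the candidate limit first, using $g^{-1}b-a=-g^{-1}ha$, which puts an extra factor of $h$ in the numerator. You then use the identity $(f+h)\adj(f+h)=\det(f+h)I$ to show that the degree-$(k-1)$ part of $\adj(f+h)$ has image in $\Ker f$, so $\ell$ kills it.

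What your route buys:
\begin{itemize}
\item It needs no reduction to basis vectors and no explicit computation of the leading part of the adjugate block.
\item It uses only part~(1) of Proposition~\ref{prop:ratio}.
\item It proves outright that the numerator has no terms of degree below $q$. The paper's one-line formula for $(P_{ij})_q$ leaves this implicit; it rests on the top-left block of $\adj(G)$ having order at least $q$.
\end{itemize}

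The paper's computation is more concrete: it identifies the leading term of the numerator itself rather than only bounding the order of the error.

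Both arguments carry over unchanged to the relative form in Proposition~\ref{prop:linalg-relative}, because restricting to a linear subspace cannot lower the order of the numerator.
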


\begin{proof}
Choose decompositions
\[
 V=V'\oplus\Ker f,\qquad W=\Im f\oplus W'
\]
and bases adapted to them.  Put $p=\rank f$ and $q=\dim\Ker f$.  In these
bases
\[
 F=\begin{pmatrix}C&0\\0&0\end{pmatrix},\qquad
 G=\begin{pmatrix}C+R&S\\T&U\end{pmatrix},
\]
where $C\in M_p(\R)$ is invertible and the entries of $R,S,T,U$ are local
coordinates centred at $F$.  Since $\ell$ annihilates $\Ker f$ and $b\in\Im f$,
it is enough by linearity to treat
$\ell=e_i^{\trans}$ and $b=Ce_j$, with $1\le i,j\le p$, 
where $e_i$ is the $i$th standard basis. 

Write
\[
 e_i^{\trans}G^{-1}Ce_j=\frac{P_{ij}(R,S,T,U)}{Q(R,S,T,U)},
 \qquad Q=\det G.
\]
The lowest-degree homogeneous part of $Q$ has degree $q$ and equals
\[
 Q_q=\det(C)\det(U).
\]
For the numerator, the degree-$q$ homogeneous part is
\[
 (P_{ij})_q
 =e_i^{\trans}\adj(C)Ce_j\,\det(U)
 =\delta_{ij}\det(C)\det(U).
\]
Consequently, if $i=j$, the numerator and denominator have the same nonzero
lowest-degree homogeneous part, so Proposition~\ref{prop:ratio}(2) gives
approximate limit $1$.  If $i\ne j$, the degree-$q$ part of the numerator
vanishes; hence the numerator has order strictly larger than $q$, and
Proposition~\ref{prop:ratio}(1) gives approximate limit $0$.  Thus
\[
 \aplim_{G\to F}e_i^{\trans}G^{-1}Ce_j=\delta_{ij},
\]
which proves the theorem.
\end{proof}

For applications to metric spaces we need a relative version.  Let $E$ be a
Euclidean space, $x_0\in E$, and let $L\subset E$ be an affine subspace through
$x_0$.  Approximate limits on $L$ are taken with respect to Lebesgue measure on
$L$.

\begin{proposition}[Relative form]\label{prop:linalg-relative}
In the notation and coordinates of the proof of Theorem~\ref{thm:linalg}, let
$L$ be an affine linear subspace of $\Hom(V,W)$ containing $f$.  Put $L_0:=L-f$, the linear subspace of directions parallel to $L$. If the
degree-$q$ homogeneous part $Q_q$ of $G\mapsto\det G$ at $f$ restricts to a
nonzero polynomial on $L_0$, then
\[
 \aplim_{\substack{g\to f\\g\in L}}\ell(g^{-1}b)=\ell(a).
\]
\end{proposition}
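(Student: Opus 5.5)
We repeat the proof of Theorem~\ref{thm:linalg}, now restricted to $L$. The only new ingredient is Proposition~\ref{prop:ratio}(3), which handles the restriction to a linear subspace.

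First we reduce to the same elementary case. Use the adapted decompositions $V=V'\oplus\Ker f$ and $W=\Im f\oplus W'$ and the block coordinates $(R,S,T,U)$ centred at $F$. As before, $\ell$ annihilates $\Ker f$ and $b\in\Im f$, so by linearity it suffices to treat $\ell=e_i^{\trans}$ and $b=Ce_j$ with $1\le i,j\le p$. The target value $\ell(a)$ is then $\delta_{ij}$. This reduction is legitimate here too: approximate limits on $L$ behave linearly, because a finite intersection of sets of relative density one in $L$ again has relative density one. Next we translate by $f$ and identify $L$ with the linear space $L_0$, taking Lebesgue measure on $L_0$ in any linear coordinates. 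Proposition~\ref{prop:diffeodensity} shows that the choice of coordinates does not matter. The function $e_i^{\trans}G^{-1}Ce_j=P_{ij}/Q$ restricted to $L_0$ is then a ratio of the polynomials $P_{ij}|_{L_0}$ and $Q|_{L_0}$.

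Now we compare lowest-degree parts on $L_0$. Taking homogeneous components commutes with restriction to a linear subspace. Hence the degree-$q$ part of $Q|_{L_0}$ is $Q_q|_{L_0}$, which is nonzero by hypothesis, and all lower-degree parts vanish because they already vanish on the whole space. So $Q|_{L_0}$ has order exactly $q$. The theorem's proof gives $(P_{ij})_q=\delta_{ij}Q_q$, and $P_{ij}$ has no terms of degree below $q$, since $P_{ij}=e_i^{\trans}\adj(G)Ce_j$ and each entry of the adjugate near $F$ vanishes to order at least $q$.

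Restricting, we get two cases.
\begin{itemize}
\item If $i=j$, the numerator and denominator have the same nonzero lowest-degree part $Q_q|_{L_0}$. Proposition~\ref{prop:ratio}(2),(3) then gives approximate limit $1$.
\item If $i\ne j$, the numerator on $L_0$ either is identically zero, giving limit $0$ trivially, or has order strictly greater than $q$. In the latter case Proposition~\ref{prop:ratio}(1),(3) gives limit $0$.
\end{itemize}
There is one more point to check: the invertible maps in $L$ form an open subset of $L$ whose complement is $\{Q|_{L_0}=0\}$. This is a proper algebraic subset because $Q|_{L_0}\neq0$, so it has measure zero, and the function is defined almost everywhere on $L$.

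The main obstacle is confirming that the numerator really has no homogeneous terms of degree below $q$, which the theorem's proof used implicitly. We justify it by noting that every $(p+q-1)$-minor of $G$ vanishes to order at least $q-1$ at $F$. Contracting with $Ce_j$, which lies in the image block, then yields order at least $q$. Alternatively, we can expand $\det$ of the matrix obtained by replacing the $i$th column of $G$ with $Ce_j$ (Cramer's rule): its lower-right $q\times q$ block still has entries of order one. Once this is settled, the relative statement follows formally from Proposition~\ref{prop:ratio}(3).
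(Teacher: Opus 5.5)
Your proposal is correct and follows the paper's proof. You restrict $P_{ij}$ and $Q$ to $L_0$, use the hypothesis $Q_q|_{L_0}\neq0$ to keep the order of the denominator equal to $q$, and compare lowest-degree parts via Proposition~\ref{prop:ratio}. Your additional remarks (linearity of approximate limits, the identically zero numerator, the null set $\{Q|_{L_0}=0\}$) fill in details the paper leaves implicit.

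One slip should be fixed. It is false that every entry of $\adj(G)$ vanishes to order $q$ at $F$: entries with row in the $\Ker f$ block and column in the $W'$ block generally have order only $q-1$. Your Cramer's-rule argument is the correct justification, but the reason is that the entire last $q$ columns vanish at $F$, not merely the lower-right block. Replacing the $i$th column ($i\le p$) of $G$ by $Ce_j$ leaves the last $q$ columns, which consist of the entries of $S$ and $U$, unchanged. These all vanish at $F$, so the determinant has order at least $q$.
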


\begin{proof}
Translate by $f$ and restrict the numerator and denominator in the preceding proof to $L_0=L-f$. By assumption the denominator still has lowest degree $q$.  The same comparison
of lowest-degree parts, followed by Proposition~\ref{prop:ratio}(3) in the
case $i=j$ and Proposition~\ref{prop:ratio}(1) in the case $i\ne j$, proves the
claim.
\end{proof}

\begin{corollary}\label{cor:1pt}
For the all-ones matrix $J_{n,n}$,
\[
 \aplim_{G\to J_{n,n}}\sumofentry(G^{-1})=1.
\]
\end{corollary}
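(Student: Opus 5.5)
We plan to deduce Corollary~\ref{cor:1pt} directly from Theorem~\ref{thm:linalg}. Take $V=W=\R^n$ and let $f$ be the linear map given by $J_{n,n}$. The sum of entries can be written as $\sumofentry(G^{-1})=J_{1,n}G^{-1}J_{n,1}$. We therefore set $\ell=J_{1,n}$, viewed as the functional $x\mapsto\sum_i x_i$ in $V^*$, and $b=J_{n,1}$. The corollary then becomes the statement $\aplim_{G\to J_{n,n}}\ell(G^{-1}b)=\ell(a)$ for a suitable preimage $a$ of $b$.

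We verify the hypotheses in order. First, $J_{n,n}x=(\sum_i x_i)J_{n,1}$. Hence $\Ker f=\{x:\sum_i x_i=0\}$, and $\ell$ vanishes there by definition. Second, $\Im f=\R J_{n,1}$, so $b=J_{n,1}\in\Im f$. Third, we need a concrete $a$ with $f(a)=b$; the choice $a=e_1$ works, since $J_{n,n}e_1=J_{n,1}$. Alternatively one may take $a=\frac1n J_{n,1}$. Either way $\ell(a)=1$.

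Theorem~\ref{thm:linalg} then gives
\[
 \aplim_{G\to J_{n,n}}J_{1,n}G^{-1}J_{n,1}=\ell(a)=1,
\]
where $G$ ranges over invertible matrices, an open set of full measure near $J_{n,n}$. This is the claim.

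No step here is a genuine obstacle. The only point needing care is the choice of $\ell$ and $b$ so that $\ell|_{\Ker f}=0$ holds exactly. That works because the kernel of $J_{n,n}$ and the kernel of the all-ones functional coincide. The substantive work, namely comparing lowest-degree homogeneous parts with $q=n-1$, is already contained in the proof of the theorem.
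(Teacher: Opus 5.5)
Your proposal is correct and matches the paper's proof exactly: the paper also applies Theorem~\ref{thm:linalg} with $\ell=J_{1,n}$, $b=J_{n,1}$, and preimage $a=e_1$, after noting that $J_{1,n}$ annihilates $\Ker J_{n,n}$. Your explicit check that $\Ker J_{n,n}=\{x:\sum_i x_i=0\}$ and that $\ell(a)=1$ simply spells out details the paper leaves implicit.
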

\begin{proof}
The row vector $J_{1,n}$ annihilates $\Ker J_{n,n}$, and
$J_{n,1}=J_{n,n}e_1$.  Apply Theorem~\ref{thm:linalg} to
$\ell=J_{1,n}$ and $b=J_{n,1}$.
\end{proof}

\begin{definition}\label{def:multisetsver}
For $A=(a_{ij})\in M_n(\R)$ and
$\bm p=(p_1,\ldots,p_n)\in\Z_{>0}^n$, put $N=\sum_i p_i$ and define
\[
(A)_{\bm p}:=
\begin{pmatrix}
a_{11}J_{p_1,p_1}&\cdots&a_{1n}J_{p_1,p_n}\\
\vdots&\ddots&\vdots\\
a_{n1}J_{p_n,p_1}&\cdots&a_{nn}J_{p_n,p_n}
\end{pmatrix}.
\]
For $1\le i\le n$, let $r_i\in\R^{1\times N}$ be the indicator row vector of
the $i$th block.
\end{definition}

\begin{theorem}\label{thm:suminv}
Let $A\in M_n(\R)$ be invertible, let $\bm p\in\Z_{>0}^n$, and put
$J=(A)_{\bm p}$.  Write $w=A^{-1}J_{n,1}$.
\begin{enumerate}
\item For each $i$,
\[
 \aplim_{G\to J} r_iG^{-1}J_{N,1}=w_i.
\]
In particular,
\[
 \aplim_{G\to J}\sumofentry(G^{-1})=\sumofentry(A^{-1}).
\]
\item If $A$ is symmetric, the same conclusions hold when $G$ is restricted
to the affine space of symmetric matrices having the same diagonal as $J$.
\end{enumerate}
\end{theorem}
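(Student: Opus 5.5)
Part (1) is a direct application of Theorem~\ref{thm:linalg} with $f=J=(A)_{\bm p}$, $\ell=r_i$ and $b=J_{N,1}$. Introduce the $N\times n$ block-indicator matrix $P$ whose $k$th column is $r_k^{\trans}$. Then $J=PAP^{\trans}$, and $P^{\trans}P=\diag(p_1,\ldots,p_n)$ is invertible, so $P$ is injective and $P^{\trans}$ is surjective. Since $A$ is invertible, it follows that $\Ker J=\Ker P^{\trans}$, the vectors whose entries sum to zero within each block, and $\Im J=\Im P$, the block-constant vectors.

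The hypotheses of Theorem~\ref{thm:linalg} then hold: $r_i=e_i^{\trans}P^{\trans}$ annihilates $\Ker P^{\trans}$, and $J_{N,1}=PJ_{n,1}\in\Im P$. For a preimage, set $a=P\diag(p)^{-1}w$, where $\diag(p)=\diag(p_1,\ldots,p_n)$. Then
\[
 Ja=PAP^{\trans}P\diag(p)^{-1}w=PAw=PJ_{n,1}=J_{N,1}.
\]
Finally $r_ia=e_i^{\trans}P^{\trans}P\diag(p)^{-1}w=w_i$. Theorem~\ref{thm:linalg} gives $\aplim_{G\to J} r_iG^{-1}J_{N,1}=w_i$. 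Summing over $i$ and using $\sum_i r_i=J_{1,N}$, together with linearity of approximate limits (the intersection of finitely many density-one sets has density one), gives $\aplim\sumofentry(G^{-1})=\sum_i w_i=\sumofentry(A^{-1})$.

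For part (2), apply Proposition~\ref{prop:linalg-relative} with $L$ the affine space of symmetric matrices with the same diagonal as $J$. The only hypothesis to verify is that the lowest-degree part $Q_q$ of $\det G$ at $J$ restricts to a nonzero polynomial on $L_0$, the symmetric matrices with zero diagonal. I expect this to be the main obstacle, because the generic coordinates of the proof of Theorem~\ref{thm:linalg} are not adapted to $L_0$.

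Choose the bases symmetrically: the same basis for $V$ and $W$, with $V'=\Im P$ and complement $\Ker P^{\trans}=(\Im P)^{\perp}$. Since $J$ is symmetric, this matches $W=\Im J\oplus W'$ with $W'=\Ker J$. In these coordinates a perturbation $H\in L_0$ has $U$-block equal to the compression $K^{\trans}HK$, where the columns of $K$ form a basis of $\Ker P^{\trans}$. Then $Q_q=\det(C)\det(K^{\trans}HK)$, so it suffices to find one $H\in L_0$ with $K^{\trans}HK$ invertible. Take $H=-(P\diag(p)^{-1}P^{\trans})+I_N$. This is the block-diagonal matrix $\diag(I_{p_k}-\tfrac{1}{p_k}J_{p_k,p_k})$, which I take as the starting candidate. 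Its diagonal entries $1-1/p_k$ need not vanish, so it must be corrected to have zero diagonal.

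Better: take $H=\diag\bigl(J_{p_k,p_k}-I_{p_k}\bigr)$ up to sign. Set $H=-\diag(J_{p_k,p_k}-I_{p_k})$, which is symmetric with zero diagonal. On block-sum-zero vectors $J_{p_k,p_k}$ acts as zero, so $K^{\trans}HK=K^{\trans}K$, which is positive definite. Hence $Q_q|_{L_0}\ne0$, and Proposition~\ref{prop:linalg-relative} yields the relative approximate limits. Summing over $i$ as in part (1) gives the statement for $\sumofentry$. Blocks with $p_k=1$ contribute nothing to $\Ker J$ and cause no problem.
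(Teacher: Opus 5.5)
Your proof is correct and follows the paper's route. Part (1) is Theorem~\ref{thm:linalg} applied through $J=PAP^{\trans}$; your block-averaged preimage $P\diag(p)^{-1}w$ replaces the paper's choice of putting $w_i$ in a single coordinate of each block. In part (2), your direction $H=-\diag(J_{p_k,p_k}-I_{p_k})$ is exactly the paper's test family $\diag(t_k(J_{p_k,p_k}-I_{p_k}))$ at $t_k=-1$. You evaluate $Q_q=\det(C)\det(U)$ at this single point, using that $H$ acts as the identity on $\Ker J$. The paper instead expands $\det(J+H)$ along the whole family to extract the leading term $c\prod_k t_k^{p_k-1}$. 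One small precision: $U=K^{\trans}HK$ holds for orthonormal $K$, and only up to the invertible factor $(K^{\trans}K)^{-1}$ otherwise, which does not affect nonvanishing.
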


\begin{proof}
Choose $\widetilde w\in\R^N$ by placing $w_i$ in one chosen coordinate of the
$i$th block and zeros in all other coordinates.  Then
$J\widetilde w=J_{N,1}$.  Moreover, every $r_i$ annihilates $\Ker J$: indeed,
$J$ factors as $EAE^{\trans}$, where $E\colon\R^n\to\R^N$ repeats the $i$th
coordinate $p_i$ times and $E^{\trans}\colon\R^N\to\R^n$ takes block sums. 
Note that the $i$th row of $E^{\trans}$ is $r_i$. 
Since $A$ is invertible, $\Ker J=\Ker E^{\trans}$. 
Theorem~\ref{thm:linalg} therefore gives
\[
 \aplim_{G\to J}r_iG^{-1}J_{N,1}=r_i\widetilde w=w_i.
\]
Summing over $i$ proves the second assertion in (1).

For (2), let $\mathcal L$ be the affine space of symmetric matrices with the
same diagonal as $J$.  The rank of $J$ is $n$, so the lowest-degree part of
$\det(J+H)$ has degree $N-n$.  It remains, by
Proposition~\ref{prop:linalg-relative}, to check that this homogeneous
polynomial is not identically zero on the zero-diagonal symmetric tangent
space of $\mathcal L$.  Take $H$ block diagonal, with $i$th diagonal block
$t_i(J_{p_i,p_i}-I_{p_i})$ (and interpret the block as $0$ if $p_i=1$).
On the $(p_i-1)$-dimensional subspace of vectors in the $i$th block whose
coordinates sum to zero, $H$ acts as $-t_iI$.  On the complementary
$n$-dimensional subspace of block-constant vectors, $J+H$ tends to the
invertible map represented by $A\diag(p_1,\ldots,p_n)$ as $t_i\to0$ 
(with respect to the basis of 
block-indicator vectors). 
Consequently 
\[
 \det(J+H)=c\prod_{i=1}^n t_i^{p_i-1}+\text{terms of higher total degree}
\]
with $c=(-1)^{N-n}\det(A)\prod_{i=1}^np_i\ne0$.  
Thus the required restriction is nonzero, and the relative
form of the theorem applies to every $r_i$.  Summing again gives the magnitude
statement.
\end{proof}

\begin{remark}\label{rem:limwt}
The blockwise statement in Theorem~\ref{thm:suminv} is the key point for
weightings: although the individual coordinates of $G^{-1}J_{N,1}$ need not
converge when a block collapses, the sum of the coordinates in each collapsing
block has the canonical approximate limit $w_i$.
\end{remark}

\section{Convergence of metric spaces}

\label{sec:conti}

\subsection{Gromov-Hausdorff metric}
\label{subsec:ghm}

We recall the basic facts about the Gromov--Hausdorff metric \cite{bbi-cou,bh-non}. 
Let $(\mathcal{X}, d_{\mathcal{X}})$ be a metric space. 
Recall that the Hausdorff metric $d_\Haus$ between subsets 
$A$ and $B$ of $\mathcal{X}$ is 
\begin{equation}
d_\Haus(A, B)=\inf \{r>0\mid 
A\subset N_r(B) \mbox{ and } B\subset N_r(A)\}, 
\end{equation}
where 
$N_r(A)=\{\bm{x}\in\mathcal{X}\mid d_{\mathcal{X}}(x, A)<r\}$. 
In general, the Gromov-Hausdorff metric between two 
metric spaces $(X, d_X)$ and $(Y, d_Y)$ is 
\begin{equation}
d_{\GH}(X, Y)=
\inf_{X\stackrel{i}{\hookrightarrow}\mathcal{X}
\stackrel{j}{\hookleftarrow} Y}
d_\Haus(i(X), j(Y)), 
\end{equation}
where the infimum runs over all isometric embeddings $i:X\to\mathcal{X}$ and $j:Y\to\mathcal{X}$ into a common metric space $\mathcal{X}$. 
(We omit embeddings $i$ and $j$ in the sequel.) 

Let $p_1:X\times Y\to X$ (resp. $p_2:X\times Y\to Y$) 
be the first (resp. second) projection. 
A subset $\mathcal{R}\subset X\times Y$ is called a 
\emph{correspondence} if the restrictions 
$p_1|_{\mathcal{R}}:\mathcal{R}\to X$ and 
$p_2|_{\mathcal{R}}:\mathcal{R}\to Y$ are both surjective. 

The \emph{distortion} of $\mathcal{R}$ is defined by 
\begin{equation}
\dis(\mathcal{R})=\sup
\{|d_X(x, x')-d_Y(y, y')| : 
(x, y), (x', y')\in\mathcal{R}\}. 
\end{equation}
The distortion is related to the Gromov-Hausdorff metric 
by the following formula \cite[Theorem 7.3.25]{bbi-cou}
\begin{equation}
\label{eq:GHinfdis}
d_{\GH}(X, Y)=\frac{1}{2}\cdot\inf\{\dis(\mathcal{R})\mid 
\mathcal{R}\subset X\times Y\mbox{ is a correspondence}\}. 
\end{equation}
Let us denote by $D_X=(d_X(x, x'))_{x, x'\in X}$ the 
distance matrix of $X$. We will describe the Gromov-Hausdorff 
convergence of a sequence of finite metric spaces in terms 
of $D_X$. 

\subsection{Spaces of distance and zeta matrices}
\label{subsec:distzetasp}

We now describe Gromov--Hausdorff convergence to a fixed finite metric space $X$ in terms of distance matrices, under a bound on the cardinalities of the approximating spaces.


Recall that for an ordered metric space $X$, 
the \emph{distance matrix} $D_X$ is 
\begin{equation}
D_X=(d_X(x_i, x_j))_{i, j=1, \dots, n}. 
\end{equation}
The distance matrix depends on the ordering. Let $\rho:[n]\to[n]$ be a permutation of 
$[n]=\{1, \dots, n\}$. Then 
$X=\{x_{\rho(1)}, \dots, x_{\rho(n)}\}$ determines another 
distance matrix which we denote by 
\begin{equation}
D_{\rho(X)}=(d_X(x_{\rho(i)}, x_{\rho(j)}))_{i, j=1, \dots, n}. 
\end{equation}
The set of all distance matrices of $n$-point 
metric spaces is denoted by 
\begin{equation}
\distanceM_n=\{D_X\mid X=\{x_1, \dots, x_n\}\mbox{ is 
a metric space}\}. 
\end{equation}
More explicitly, $\distanceM_n$ has the following 
presentation. 
\begin{equation}
\distanceM_n=
\left\{(d_{ij})_{i, j=1, \dots, n}\left| \,
\begin{aligned}
&d_{ii}=0, d_{ij}=d_{ji}, (1\leq i, j\leq n)\\
&d_{ij}>0, (1\leq i, j\leq n, i\neq j)\\
&d_{ik}\leq d_{ij}+d_{jk}, (i\neq j\neq k\neq i)
\end{aligned}
\right.
\right\}. 
\end{equation}
Let $\Sym_n^0=\{(d_{ij})_{i, j=1, \dots, n}\mid 
d_{ij}=d_{ji}, d_{ii}=0\}$ be the set of symmetric 
matrices with zero diagonal entries. Then, clearly, 
$\distanceM_n\subset\Sym_n^0$, furthermore, 
$\distanceM_n$ is a full-dimensional cone in $\Sym_n^0$. 
The closure of $\distanceM_n$ in $\Sym_n^0$ is 
\begin{equation}
\overline{\distanceM_n}=
\left\{(d_{ij})_{i, j=1, \dots, n}\left| \,
\begin{aligned}
&d_{ii}=0, d_{ij}=d_{ji}, (1\leq i, j\leq n)\\
&d_{ij}\geq 0, (1\leq i, j\leq n, i\neq j)\\
&d_{ik}\leq d_{ij}+d_{jk}, (i\neq j\neq k\neq i)
\end{aligned}
\right.
\right\}. 
\end{equation}
We will consider ($\frac{n(n-1)}{2}$-dimensional) 
Lebesgue measure on $\Sym_n^0$ through the 
natural identification $\Sym_n^0\simeq\R^{n(n-1)/2}$. 
We will use the measure to discuss approximate 
continuity with respect to the limit $Y\to X$. 
Since the defining inequalities of $\distanceM_n$ 
are linear, we have the following. 
\begin{proposition}
\label{prop:symmat}
$\distanceM_n$ has a positive density at every point 
$D\in\overline{\distanceM_n}$ in $\Sym_n^0$.
\end{proposition}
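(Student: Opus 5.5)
The plan is to use that $\overline{\distanceM_n}$ is a closed polyhedral cone in $\Sym_n^0$ (finitely many homogeneous linear inequalities) with nonempty interior $\distanceM_n$. Fix $D\in\overline{\distanceM_n}$. First I reduce to the tangent cone at $D$. Let $K_D=\{H\in\Sym_n^0 : D+H\in\overline{\distanceM_n}\text{ near }0\}$. Since the constraints are linear, only the constraints active at $D$ matter for small $H$. The inactive ones (strict inequalities $d_{ij}>0$ or strict triangle inequalities) remain satisfied on a small ball $B_{r_0}(D)$. Hence for $r<r_0$,
\[
\overline{\distanceM_n}\cap B_r(D)=(D+K_D)\cap B_r(D),
\]
where $K_D$ is the polyhedral cone cut out by the homogeneous parts of the active constraints. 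In particular $K_D\supset\overline{\distanceM_n}$, because $\overline{\distanceM_n}$ is itself a cone defined by homogeneous inequalities all satisfied by $D$'s active set, and $D$ lies in it.

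Next I show $K_D$ has nonempty interior. Take any $E$ in the open cone $\distanceM_n$, for example the discrete metric with all $d_{ij}=1$ for $i\ne j$. Every active constraint is a linear form $\lambda$ with $\lambda(D)=0$ and $\lambda\ge0$ on the cone. Each such form is either $d_{ij}\ge0$ or $d_{ij}+d_{jk}-d_{ik}\ge0$, and both are strictly positive at $E$ (for the second, $1+1-1=1$). So $E$ lies in the interior of $K_D$, and $\operatorname{int}K_D\ne\emptyset$.

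Finally, the density computation is by scaling invariance of a cone. For $r<r_0$,
\[
\frac{\vol(\distanceM_n\cap B_r(D))}{\vol(B_r(D))}
=\frac{\vol(K_D\cap B_r(0))}{\vol(B_r(0))}
=\frac{\vol(K_D\cap B_1(0))}{\vol(B_1(0))}.
\]
Here I use that $\overline{\distanceM_n}\setminus\distanceM_n$ lies in finitely many hyperplanes and so has measure zero, and that $K_D$ is invariant under positive dilations. The ratio is therefore constant for small $r$, so the limit exists. It is positive because $K_D\cap B_1(0)$ contains a small ball around $\varepsilon E/|E|$.

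The main obstacle is the first step: making rigorous that the set coincides with the translated tangent cone on a small ball. This needs the observation that each inactive inequality has a positive margin at $D$, so by continuity it stays strict on a neighbourhood. Everything else follows from homogeneity.
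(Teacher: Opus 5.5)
Your proof is correct and follows the same route as the paper. Both arguments use three steps: the closure is a polyhedral cone that coincides near $D$ with $D+K_D$; dilation invariance of $K_D$ makes the volume ratio constant for small $r$; and the equilateral metric, strictly positive on every active constraint, gives $K_D$ nonempty interior and hence positive density. Your version makes the active/inactive-constraint step explicit where the paper only asserts it. One small imprecision: $\distanceM_n$ is not open, since its triangle inequalities are non-strict, so it is not the interior of its closure. This is harmless, because you separately note that $\overline{\distanceM_n}\setminus\distanceM_n$ is a null set.
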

\begin{proof}
The closure $\overline{\distanceM_n}$ is a full-dimensional convex polyhedral cone in $\Sym_n^0$. Fix, for example, an equilateral distance matrix $D_0$, which is an interior point of $\distanceM_n$. For every $D\in\overline{\distanceM_n}$, convexity implies that the segment $(D,D_0]$ lies in the interior. 
Since the cone is defined by finitely many linear inequalities, the ratio of the volume of this cone to that 
of the ball centered at $D$ with sufficiently small radius is constant.  
Hence the density exists and equals the positive solid-angle ratio of this tangent cone.
\end{proof}
Similarly, we denote the set of zeta matrices by 
\begin{equation}
\begin{split}
\zetaM_n&=\{Z_X\mid X=\{x_1, \dots, x_n\}\mbox{ is 
a metric space}\}\\
&=
\left\{(q_{ij})_{i, j=1, \dots, n}\left| \,
\begin{aligned}
&q_{ii}=1, q_{ij}=q_{ji}, (1\leq i, j\leq n)\\
&0<q_{ij}<1, (1\leq i, j\leq n, i\neq j)\\
&q_{ik}\geq q_{ij}\cdot q_{jk}, (i\neq j\neq k\neq i)
\end{aligned}
\right.
\right\}. 
\end{split}
\end{equation}
Let 
\begin{equation}
\Sym_n^1=\{(q_{ij})_{i, j=1, \dots, n}\mid 
q_{ij}=q_{ji}>0, q_{ii}=1\}
\end{equation}
be the set of symmetric matrices 
with positive entries such that all 
diagonal entries are $1$. 
The componentwise exponential 
$(d_{ij})_{ij}\longmapsto (q_{ij}=\exp(-d_{ij}))_{ij}$ 
gives a diffeomorphism 
\begin{equation}
\Sym_n^0\stackrel{\simeq}{\longrightarrow}\Sym_n^1, 
\end{equation}
which sends $\distanceM_n$ bijectively to $\zetaM_n$. 
Hence Proposition~\ref{prop:diffeodensity} shows that density-zero and density-one statements, and therefore approximate limits, are equivalent in the two coordinate systems.

\subsection{Distance matrices near a limit}

Fix $X=\{x_1,\ldots,x_n\}\in\FMetmag$ and an integer $\bound\ge n$, and put
\[
 \sigma=\frac14\min\{d_X(x,x'):x\ne x'\}.
\]
Let $Y\in\FMetmag$ satisfy $\#Y\le\bound$ and $d_{\GH}(X,Y)<\sigma$.
Choose isometric embeddings of $X$ and $Y$ into a common metric space such
that their Hausdorff distance is less than $\sigma$. For each $i$, let
\[
 Y_i=Y\cap N_\sigma(x_i).
\]
Then
\begin{equation}
\label{eq:Ydec}
 Y=Y_1\sqcup\cdots\sqcup Y_n.
\end{equation}
Moreover, two points $y,y'\in Y$ lie in the same $Y_i$ if and only if
$d_Y(y,y')<2\sigma$. Indeed, points in one cluster are less than $2\sigma$
apart, while points in different clusters are more than $2\sigma$ apart.
Thus the decomposition~\eqref{eq:Ydec} is independent of the chosen
embeddings.

Let $s_i=\#Y_i$. We call the vector 
$\bm{s}=(s_1, \dots, s_n)\in(\Z_{>0})^n$ 
the \emph{multiplicity vector}. 
For each $i=1, \dots, n$, fix an ordering of elements 
$Y_i=\{y^i_1, \dots, y^i_{s_i}\}$ and consider associated 
total ordering 
$Y=\{y^1_1, y^1_2, \dots, y^1_{s_1}, 
y^2_1, \dots, y^2_{s_2}, \dots, 
y^n_1, \dots, y^n_{s_n}\}$ on $Y$. 

The following lemma relates Gromov-Hausdorff distance to the distance-matrix coordinates introduced above. 


\begin{lemma}
\label{lem:distmat}
Let $X,Y,n,\bound,\sigma,$ and $\bm{s}$ be as above. 
Let $0<\varepsilon<\sigma$. 
Then $d_{\GH}(X,Y)<\varepsilon$ if and only if there is a permutation
$\rho$ of $[n]$ such that
\[
 \|D_Y-(D_{\rho(X)})_{\bm s}\|_\infty<2\varepsilon,
\]
where $\|(a_{ij})\|_\infty=\max_{i,j}|a_{ij}|$. 
\end{lemma}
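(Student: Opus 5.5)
The plan is to prove both directions through the distortion formula \eqref{eq:GHinfdis}. Since $X$ and $Y$ are finite, there are only finitely many correspondences, so the infimum in \eqref{eq:GHinfdis} is attained. Hence $d_{\GH}(X,Y)<\varepsilon$ holds if and only if some correspondence $\mathcal R\subset Y\times X$ has $\dis(\mathcal R)<2\varepsilon$. For the same finiteness reason, each supremum defining a distortion is a maximum, so strict inequalities are preserved throughout.

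\emph{Converse direction.} Suppose $\rho$ satisfies $\|D_Y-(D_{\rho(X)})_{\bm s}\|_\infty<2\varepsilon$. Define
\[
\mathcal R=\{(y^i_k,x_{\rho(i)}) : 1\le i\le n,\ 1\le k\le s_i\}.
\]
The projection of $\mathcal R$ to $Y$ is onto because every point of $Y$ has the form $y^i_k$. The projection to $X$ is onto because each $s_i\ge1$ and $\rho$ is a bijection. The entry of $(D_{\rho(X)})_{\bm s}$ in position $(y^i_k,y^j_l)$ is exactly $d_X(x_{\rho(i)},x_{\rho(j)})$. Therefore $\dis(\mathcal R)$ equals the maximal entry of $|D_Y-(D_{\rho(X)})_{\bm s}|$, which is less than $2\varepsilon$. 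By \eqref{eq:GHinfdis}, $d_{\GH}(X,Y)<\varepsilon$.

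\emph{Forward direction, step 1: $\mathcal R$ is a function.} Take a correspondence $\mathcal R$ with $\dis(\mathcal R)<2\varepsilon<2\sigma$. If $(y,x)$ and $(y,x')$ both lie in $\mathcal R$, then $d_X(x,x')=|d_X(x,x')-d_Y(y,y)|<2\sigma$. Distinct points of $X$ are at least $4\sigma$ apart, so $x=x'$. Thus $\mathcal R$ is the graph of a map $\varphi\colon Y\to X$, and $\varphi$ is surjective because $\mathcal R$ is a correspondence.

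\emph{Forward direction, step 2: $\varphi$ respects the clusters.} I will use the embedding-independent description of the clusters: $y,y'$ lie in the same $Y_i$ if and only if $d_Y(y,y')<2\sigma$.
If $\varphi(y)=\varphi(y')$, then $d_Y(y,y')<2\varepsilon<2\sigma$, so $y$ and $y'$ lie in the same cluster.
If $\varphi(y)\neq\varphi(y')$, then $d_Y(y,y')>4\sigma-2\varepsilon>2\sigma$, so they lie in different clusters.

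Hence $\varphi$ is constant on each $Y_i$ and takes distinct values on distinct clusters. Since there are $n$ clusters and $\varphi$ is onto the $n$ points of $X$, setting $\varphi(Y_i)=\{x_{\rho(i)}\}$ defines a permutation $\rho$ of $[n]$.

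\emph{Forward direction, step 3: the entrywise bound.} For $y\in Y_i$ and $y'\in Y_j$, the entry of $(D_{\rho(X)})_{\bm s}$ in position $(y,y')$ is $d_X(\varphi(y),\varphi(y'))$. The bound $\dis(\mathcal R)<2\varepsilon$ therefore says exactly that $\|D_Y-(D_{\rho(X)})_{\bm s}\|_\infty<2\varepsilon$.

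The main point needing care is step 2. The indexing of clusters by the points $x_i$ came from an arbitrary embedding, and a correspondence may permute them, for instance when $X$ has isometries. This is why the statement allows a permutation $\rho$ rather than requiring the identity, and why the argument uses only the intrinsic threshold $2\sigma$ to identify the clusters.
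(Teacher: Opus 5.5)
Your proof is correct. The converse direction is the same as the paper's: the blockwise correspondence $\bigsqcup_i\{x_{\rho(i)}\}\times Y_i$ has distortion $\|D_Y-(D_{\rho(X)})_{\bm s}\|_\infty<2\varepsilon$, and \eqref{eq:GHinfdis} finishes.

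For the forward direction you take a different route. The paper chooses isometric embeddings into a common space with Hausdorff distance $<\varepsilon$. It then notes that the pieces $Y\cap N_\varepsilon(x_i)$ coincide with the clusters $Y_i$ up to relabeling. The $2\varepsilon$ bound comes from the triangle inequality through $x_{\rho(i)}$ and $x_{\rho(j)}$. You instead use \eqref{eq:GHinfdis} in both directions. A correspondence of distortion $<2\varepsilon<2\sigma$ is forced to be the graph of a surjection $\varphi\colon Y\to X$. This map is constant on each cluster and takes different values on different clusters, so $\rho$ is read off from $\varphi$, and the entrywise bound is literally the distortion bound.

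Your version is intrinsic and needs no ambient space. It also makes explicit why the relabeling is a bijection: surjectivity of $\varphi$ plus separation of clusters. The paper's phrase ``agrees with \eqref{eq:Ydec} up to relabeling'' leaves this to the reader; there it tacitly uses $X\subset N_\varepsilon(Y)$ to see that each piece is nonempty. The paper's version is shorter and more geometric.

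A minor point: you do not need the infimum in \eqref{eq:GHinfdis} to be attained. The inequality $\inf\dis<2\varepsilon$ already yields a correspondence with $\dis<2\varepsilon$.
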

\begin{proof}
Suppose $d_{\GH}(X, Y)<\varepsilon$. Then there exist 
isometric embeddings 
$X\hookrightarrow\mathcal{X}\hookleftarrow Y$ such that 
$Y\subset N_{\varepsilon}(X)$. The decomposition $Y=\bigsqcup_{i=1}^n (Y\cap N_{\varepsilon}(x_i))$ agrees with~\eqref{eq:Ydec} up to relabeling. Hence 
there exists a permutation $\rho$ of $[n]$ such that 
$Y_i=Y\cap N_{\varepsilon}(x_{\rho(i)})$. Then from 
$y^i_\alpha\in N_\varepsilon(x_{\rho(i)})$ and 
$y^j_\beta\in N_\varepsilon(x_{\rho(j)})$, we have 
\begin{equation}
\label{eq:diff2eps}
|d_Y(y^i_\alpha, y^j_\beta)-d_X(x_{\rho(i)}, x_{\rho(j)})|
<2\varepsilon, 
\end{equation}
for any 
$i, j=1, \dots, n, 1\leq\alpha\leq s_i, 1\leq\beta\leq s_j$. 
Thus we have $\|D_Y-(D_{\rho(X)})_{\bm s}\|_\infty<2\varepsilon$. 

Conversely, suppose 
$\|D_Y-(D_{\rho(X)})_{\bm s}\|_\infty<2\varepsilon$ 
for some permutation $\rho$. It is equivalent to 
(\ref{eq:diff2eps}). Then the correspondence 
\begin{equation}
\label{eq:corresp}
\mathcal{R}=
\bigsqcup_{i=1}^n\{x_{\rho(i)}\}\times Y_i\subset X\times Y. 
\end{equation}
satisfies $\dis(\mathcal{R})<2\varepsilon$. Then by 
(\ref{eq:GHinfdis}), $d_{\GH}(X, Y)<\varepsilon$. 
\end{proof}

\subsection{Stratified approximate continuity of magnitude and weighting}
\label{subsec:appconti}

There is no canonical Lebesgue measure on the union of all cardinality strata
of the Gromov--Hausdorff space of finite metric spaces: the stratum of
$N$-point spaces has dimension $N(N-1)/2$.  We therefore formulate approximate
continuity separately on each local multiplicity stratum.  This is also the
form naturally suggested by Lemma~\ref{lem:distmat}.

Fix $X=\{x_1,\ldots,x_n\}\in\FMetmag$ and
\[
 \sigma=\frac14\min_{i\ne j}d_X(x_i,x_j).
\]
Let $\bm s=(s_1,\ldots,s_n)\in\Z_{>0}^n$ and $N=\sum_i s_i$.  For an ordered
$N$-point metric space $Y$ sufficiently close to $X$ and having multiplicity
vector $\bm s$, order its points blockwise as in the preceding subsection.  In
these coordinates
\[
 D_Y\longrightarrow (D_X)_{\bm s}\quad\text{and}\quad
 Z_Y\longrightarrow (Z_X)_{\bm s}.
\]
We equip the set of such distance matrices with the restriction of Lebesgue
measure from $\Sym_N^0$.  By Proposition~\ref{prop:symmat}, this set has
positive density at $(D_X)_{\bm s}$.  Permuting points within blocks, or
permuting the blocks together with the labels of $X$, changes coordinates by a
linear isometry and therefore does not affect density-one statements.

\begin{definition}\label{def:strat-ap}
Let $F$ be a function defined almost everywhere on the above multiplicity stratum to a metric space $M$. 
We say that
$F$ has \emph{stratified approximate limit} $m\in M$ at $X$ along
$\bm s$ if, in the distance-matrix coordinates just described, for every
$\varepsilon>0$ the set
\[
 \{D_Y:d_M(F(Y),m)<\varepsilon\}
\]
has relative density one at $(D_X)_{\bm s}$ inside $\distanceM_N$.
\end{definition}

The componentwise map $D=(d_{ij})\mapsto (e^{-d_{ij}})$ is a smooth
diffeomorphism from $\Sym_N^0$ 
onto the set of symmetric matrices with positive entries and diagonal entries $1$. 
By Proposition~\ref{prop:diffeodensity}, this diffeomorphism preserves density-zero and density-one sets locally. Thus we may
compute the same approximate limits in zeta-matrix coordinates.

For $Y\in\FMetmag$, write $w_Y=Z_Y^{-1}J_{N,1}$.  If
$Y=Y_1\sqcup\cdots\sqcup Y_n$ is the cluster decomposition over $X$, define
the total weight of the $i$th cluster by 
\[
 m_i(Y):=\sum_{y\in Y_i}w_Y(y).
\]

\begin{theorem}[Approximate continuity of weighting]\label{thm:MMsplim}
Let $X=\{x_1,\ldots,x_n\}\in\FMetmag$ and let
$\bm s=(s_1,\ldots,s_n)\in\Z_{>0}^n$.  Then, for every $i=1,\ldots,n$,
\[
 \aplim_{\substack{Y\to X\\\text{along }\bm s}}m_i(Y)=w_X(x_i).
\]
Equivalently, for every $\varepsilon>0$, the set of $Y$ in the multiplicity
stratum $\bm s$ satisfying
\[
 \max_{1\le i\le n}\left|
 \sum_{y\in Y_i}w_Y(y)-w_X(x_i)\right|<\varepsilon
\]
has relative density one at $X$.
\end{theorem}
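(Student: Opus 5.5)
The plan is to reduce the statement to Theorem~\ref{thm:suminv}(2), applied with $A=Z_X$ and $\bm p=\bm s$, and then transport the conclusion back to distance-matrix coordinates inside the cone $\distanceM_N$.

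\textbf{Step 1: identify the limit in zeta coordinates.} Order $Y$ blockwise, so that $Z_Y\to (Z_X)_{\bm s}=:J$. The matrix $A=Z_X$ is symmetric and invertible, since $X\in\FMetmag$, and $w=A^{-1}J_{n,1}=\bm w_X$. In these coordinates $m_i(Y)=r_iZ_Y^{-1}J_{N,1}$, where $r_i$ is the indicator row of the $i$th block.

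\textbf{Step 2: apply the matrix result on the right ambient space.} Zeta matrices are symmetric with diagonal entries $1$, and $J$ also has diagonal entries $1$. So $Z_Y$ ranges over the affine space $\mathcal L$ of symmetric matrices with the same diagonal as $J$. Theorem~\ref{thm:suminv}(2) then gives
\[
\aplim_{G\to J,\ G\in\mathcal L} r_iG^{-1}J_{N,1}=w_X(x_i)
\]
for each $i$, with respect to Lebesgue measure on $\mathcal L$. Since $\mathcal L$ is an open-free affine copy of $\Sym_N^1$ locally near $J$ (all entries are positive near $J$), this is an approximate limit in $\Sym_N^1$. The function is defined off the null set $\{\det G=0\}$. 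Intersecting the finitely many density-one sets, one for each $i$, gives the $\max_i$ formulation.

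\textbf{Step 3: transfer to distance coordinates and restrict to metrics.} The componentwise map $D\mapsto(e^{-d_{jk}})$ is a $C^1$ diffeomorphism $\Sym_N^0\to\Sym_N^1$ sending $(D_X)_{\bm s}$ to $J$. By Proposition~\ref{prop:diffeodensity}, the sets where $\max_i|m_i-w_X(x_i)|<\varepsilon$ have density one at $(D_X)_{\bm s}$ in $\Sym_N^0$.

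By Proposition~\ref{prop:symmat}, $\distanceM_N$ has positive density at $(D_X)_{\bm s}$, which lies in its closure. Proposition~\ref{prop:restrict} then yields relative density one inside $\distanceM_N$, which is exactly Definition~\ref{def:strat-ap}.

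We also need, near $(D_X)_{\bm s}$, that points of $\distanceM_N$ really have multiplicity vector $\bm s$ with this blockwise cluster decomposition. This holds by Lemma~\ref{lem:distmat} and the $2\sigma$ criterion: off-block entries stay near $d_X\ge4\sigma$, and in-block entries are small. Hence $m_i(Y)$ agrees with $r_iZ_Y^{-1}J_{N,1}$. Reorderings within blocks are linear isometries and do not affect density statements.

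\textbf{Main obstacle.} The only substantive point is the nondegeneracy needed for the relative form. The lowest-degree part of $\det$ at $J$ must not vanish identically on the zero-diagonal symmetric directions. This is exactly what the proof of Theorem~\ref{thm:suminv}(2) verifies, so here it is bookkeeping. What must be checked carefully is that the constraint set is the symmetric, fixed-diagonal affine space rather than all of $M_N(\R)$, and that passing to the subcone $\distanceM_N$ only requires positive density, not density one.
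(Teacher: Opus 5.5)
Your proposal is correct and follows essentially the same route as the paper. You apply Theorem~\ref{thm:suminv}(2) with $A=Z_X$ and $\bm p=\bm s$ on the fixed-diagonal symmetric affine space, transfer through the exponential diffeomorphism (Proposition~\ref{prop:diffeodensity}), and pass to the cone $\distanceM_N$ via Propositions~\ref{prop:symmat} and~\ref{prop:restrict}. Your write-up is slightly more careful than the paper's terse proof: you convert to distance coordinates before restricting to the cone (where positive density is actually established), and you explicitly check the block/cluster identification and the finite intersection for the $\max_i$ form.
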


\begin{proof}
Put $A=Z_X$.  Since $X\in\FMetmag$, $A$ is invertible.  In blockwise
coordinates the zeta matrices satisfy
\[
 Z_Y\longrightarrow J:=(A)_{\bm s}.
\]
They lie in the affine space of symmetric matrices with diagonal $1$.
Theorem~\ref{thm:suminv}(2), applied to the indicator row vector $r_i$ of the
$i$th block, gives
\[
 \aplim_{Z_Y\to J}r_iZ_Y^{-1}J_{N,1}
 =(A^{-1}J_{n,1})_i=w_X(x_i).
\]
The left-hand side is precisely $m_i(Y)$.  The passage from the ambient affine
space to the metric cone is justified by Proposition~\ref{prop:restrict} and
Proposition~\ref{prop:symmat}. Finally,
the distance-to-zeta diffeomorphism preserves density-one sets.  This proves
the assertion.
\end{proof}

\begin{corollary}[Approximate continuity of magnitude]\label{thm:main}
Let $X\in\FMetmag$ and fix a multiplicity vector $\bm s$.  Then
\[
 \aplim_{\substack{Y\to X\\\text{along }\bm s}}\Mag(Y)=\Mag(X).
\]
Thus magnitude is approximately continuous at every finite metric space with
invertible zeta matrix, on every fixed local cardinality/multiplicity
stratum of the Gromov--Hausdorff space.
\end{corollary}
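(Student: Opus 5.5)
The corollary will follow from Theorem~\ref{thm:MMsplim} by summing over clusters. For $Y$ in the multiplicity stratum $\bm s$ near $X$, the cluster decomposition $Y=Y_1\sqcup\cdots\sqcup Y_n$ is a partition of $Y$. Hence, wherever $Z_Y$ is invertible,
\[
 \Mag(Y)=\sum_{y\in Y}w_Y(y)=\sum_{i=1}^n m_i(Y),
\]
and likewise $\Mag(X)=\sum_{i=1}^n w_X(x_i)$.

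Next we check that $\Mag(Y)$ is defined almost everywhere near $(D_X)_{\bm s}$. The determinant $\det Z_Y$ is a real-analytic function of the distance coordinates. The computation in the proof of Theorem~\ref{thm:suminv}(2) shows that it is not identically zero on the relevant affine space, since its lowest-degree part is nonzero. Its zero set therefore has measure zero, so $F(Y)=\Mag(Y)$ fits Definition~\ref{def:strat-ap}.

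Fix $\varepsilon>0$ and let
\[
 S_i=\{Y:|m_i(Y)-w_X(x_i)|<\varepsilon/n\}.
\]
By Theorem~\ref{thm:MMsplim}, each $S_i$ has relative density one at $(D_X)_{\bm s}$ inside $\distanceM_N$. A finite intersection of sets of relative density one again has relative density one, because the complement of $\bigcap_i S_i$ is contained in $\bigcup_i S_i^c$, and the relative measure of a finite union is at most the sum of the relative measures. On $\bigcap_i S_i$ the triangle inequality gives
\[
 |\Mag(Y)-\Mag(X)|\le\sum_{i=1}^n|m_i(Y)-w_X(x_i)|<\varepsilon.
\]
So the set $\{Y:|\Mag(Y)-\Mag(X)|<\varepsilon\}$ contains $\bigcap_i S_i$ and has relative density one.

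There is no real obstacle. The only points needing care are that the cluster decomposition, and hence each $m_i$, is well defined on the whole stratum near $X$ (this holds by the discussion preceding Lemma~\ref{lem:distmat}), and the finite-intersection property of density-one sets, which is immediate as shown above.
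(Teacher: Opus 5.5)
Your proposal is correct and follows the paper's own proof, which writes $\Mag(Y)=\sum_i m_i(Y)$ and $\Mag(X)=\sum_i w_X(x_i)$ and then invokes Theorem~\ref{thm:MMsplim} (whose ``equivalently'' clause, with the maximum over $i$, already contains your finite-intersection step). The paper also notes that the result can be read off directly from the scalar statement in Theorem~\ref{thm:suminv}(2).
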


\begin{proof}
Since $\Mag(Y)=\sum_i m_i(Y)$ and
$\Mag(X)=\sum_iw_X(x_i)$, the result follows immediately from
Theorem~\ref{thm:MMsplim}.  Alternatively, it is the scalar statement in
Theorem~\ref{thm:suminv}(2).
\end{proof}

\begin{remark}
Corollary~\ref{thm:main} should not be confused with ordinary continuity.
Ordinary Gromov--Hausdorff continuity fails dramatically: magnitude is nowhere
continuous on the space of finite metric spaces \cite[Theorem~2.5]{kry-con}.
The result says instead that, after fixing the finite-dimensional stratum in
which the approximating spaces live, the directions responsible for the
failure of continuity form a set of asymptotic density zero.
\end{remark}

\begin{remark}
Our formulation of approximate continuity treats each fixed-cardinality stratum separately and does not give a uniform statement as the cardinality tends to infinity. In contrast, no cardinality bound is needed if one restricts to finite metric spaces admitting nonnegative weightings.
Recently, Hiyoshi \cite{hiy} proved that the logarithm of magnitude is Lipschitz continuous on this class: 
\[
|\log\Mag(X_1)-\log\Mag(X_2)|\leq 2d_{\GH}(X_1, X_2). 
\]
\end{remark}

\medskip

\emph{Acknowledgements.}
This work was partially supported by JSPS KAKENHI (Grant No. JP23H00081). 
ChatGPT (OpenAI) was used during the preparation of this article for mathematical discussion and language editing. The author is responsible for all assertions in this paper.

\end{document}